\DeclareMathAlphabet{\mathcalligra}{T1}{calligra}{m}{n}
\DeclareSymbolFont{cyrletters}{OT2}{wncyr}{m}{n}
\DeclareMathSymbol{\Sha}{\mathalpha}{cyrletters}{"58}
\theoremstyle{plain}
\newtheorem*{theorem*}{Theorem}
\newtheorem*{lemma*} {Lemma}
\newtheorem*{corollary*} {Corollary}
\newtheorem*{proposition*} {Proposition}
\newtheorem{theorem}{Theorem}[section]
\newtheorem{lemma}[theorem]{Lemma}
\theoremstyle{remark}
\newtheorem*{definition}{Definition}
\theoremstyle{definition}
\begin{document}

\author{Aftab Pande}
\address{Instituto de Matem\'atica, Universidade Federal do Rio de Janeiro, Brasil}
\email{aftab.pande@gmail.com}
\title{Large images of reducible galois representations}

\maketitle

\begin{abstract} Given a reducible Galois representation $\overline{\rho}: G_{\mathbb{Q}} \rightarrow GL_2( \mathbb{F}_q)$ we show there exists an irreducible deformation $\rho : G_{\mathbb{Q}} \rightarrow GL_2 (\mathbb{W} [[T_1, T_2,.., T_r,....,]])$  of $\overline{\rho}$ ramified at infinitely many primes, where $\mathbb{W}$ denotes the ring of Witt vectors of  $\mathbb{F}_q$. This is a modification of Ramakrishna's result for the irreducible case.
\end{abstract}

\footnote{AMS MSC codes 11F80. Keywords: Deformations; Galois representations.}

\section{Introduction}

In \cite{R} it was shown that one could lift a mod $p$ representation $\overline{\rho}$ to a power series ring in infinitely many variables which was generalized for totally real fields by \cite{P}. In this paper, we extend these results for a reducible representation  $\overline{\rho}: G_{\mathbb{Q}} \rightarrow GL_2(\mathbb{F}_q)$, where $\mathbb{F}_q$ is a finite field of residue characteristic $p$ and cardinality $q = p^t$.  We use cohomology classes which work for all lifts $\rho_n$ unlike \cite{HR} where their cohomology classes cannot be used to lift from mod $p$ to mod $ p^2$. This allows us to get an irreducible deformation of a reducible representation in infinitely many variables. The case of a reducible deformation of a residually reducible representation was addressed in \cite{S}. The author hopes to use these methods to generalize other lifting results of Ramakrishna for arbitrary number fields in an ongoing project.

Our main theorem is the following:

\begin{theorem} \label{t1}
Let $\overline{\rho}: G_{\mathbb{Q}} \rightarrow GL_2( \mathbb{F}_q)$ where $\overline{\rho} = \left(
                  \begin{array}{cc}
                    \phi & * \\
                    0 & 1 \\
                  \end{array}
                \right)$ and $S$ be the set of primes containing $p$ and $\infty$ and all those at which $\overline{\rho}$ is ramified. Suppose:
\begin{itemize}
\item $p \geq 3$
\item $\overline{\rho}$ is indecomposable
\item the $\mathbb{F}_p$ span of the elements in the image of $\phi$ is all of $\mathbb{F}_q$,
\item $\phi^2 \ne1$
\item $\phi \ne \chi, \chi^{-1}$, where $\chi$ is the mod $p$ reduction of the cyclotomic character
\item for $\overline{\rho}$ odd that $\overline{\rho} |_{G_p}$ is not unramified of the form $ \left(
                  \begin{array}{cc}
                    1 & * \\
                    0 & 1 \\
                  \end{array}
                \right)$, and for $\overline{\rho}$ even that $\overline{\rho} |_{G_p}$ is not $ \left(
                  \begin{array}{cc}
                    \chi & 0 \\
                    0 & 1 \\
                  \end{array}
                \right)$ or $ \left(
                  \begin{array}{cc}
                    \chi^{-1} & * \\
                    0 & 1 \\
                  \end{array}
                \right)$, where the $*$ may be trivial.
                
\end{itemize}
then there exists an irreducible deformation $\rho : G_{\mathbb{Q}} \rightarrow GL_2 (\mathbb{W} [[T_1, T_2,.., T_r,....,]])$  of $\overline{\rho}$ ramified at infinitely many primes, where $\mathbb{W}$ denotes the ring of Witt vectors of  $\mathbb{F}_q$.
\end{theorem}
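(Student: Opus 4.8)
The plan is to follow Ramakrishna's strategy of building the deformation as an inverse limit of mod $p^n$ lifts, while adding ramification at one new prime at each stage to introduce a fresh variable $T_r$. Write $\overline{\rho} = \left(\begin{smallmatrix} \phi & * \\ 0 & 1 \end{smallmatrix}\right)$ and let $\mathrm{ad}^0 \overline{\rho}$ be the trace-zero adjoint module; the obstruction to lifting a given $\rho_n \colon G_{\mathbb{Q}} \to GL_2(\mathbb{W}/p^n)$ to $GL_2(\mathbb{W}/p^{n+1})$ lives in $H^2(G_{\mathbb{Q},S_n}, \mathrm{ad}^0 \overline{\rho})$, and the set of lifts, when nonempty, is a torsor under $H^1(G_{\mathbb{Q},S_n}, \mathrm{ad}^0 \overline{\rho})$. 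First I would record the local representation-theoretic consequences of the hypotheses ($p \geq 3$, $\phi^2 \neq 1$, $\phi \neq \chi^{\pm 1}$, indecomposability, the span condition, and the conditions on $\overline{\rho}|_{G_p}$): these are exactly what is needed so that $\mathrm{ad}^0 \overline{\rho}$ has no troublesome $G_{\mathbb{Q}}$-invariants or coinvariants, so that $H^0$ and the relevant local $H^0, H^2$ terms vanish, and so that one can find an auxiliary prime $w$ at which a suitable local cohomology class exists.

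The core of the argument is an inductive step: given $\rho_n$ ramified at a finite set $S_n \supseteq S$, I want to (i) kill the global obstruction class in $H^2(G_{\mathbb{Q},S_n}, \mathrm{ad}^0\overline{\rho})$ by enlarging the ramification set, and (ii) arrange that the deformation genuinely ramifies at a new prime, contributing a new variable. For (i) I would use the Poitou--Tate sequence: the obstruction dies after allowing ramification at a finite set of primes $q$ whose Frobenius conjugacy classes are prescribed by Chebotarev, chosen so that the local lifting problem at $q$ is unobstructed and the global-to-local map on $H^2$ becomes injective on the relevant subspace — this is the standard Ramakrishna/Taylor "annihilate $\Sha^2$" maneuver, and here one must check the Greenberg--Wiles Euler characteristic formula still gives enough room given that $\mathrm{ad}^0\overline{\rho}$ is not irreducible. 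For (ii), the key input flagged in the introduction is that the chosen cohomology classes "work for all lifts $\rho_n$" — i.e., unlike \cite{HR}, the local class at the auxiliary prime can be chosen independently of $n$ so that it propagates through the whole tower; I would make this precise by exhibiting, at a Chebotarev-chosen prime $w_r$, a class in $H^1(G_{w_r}, \mathrm{ad}^0\overline{\rho})$ not in the image of the unramified classes, which lifts compatibly, and whose presence forces ramification at $w_r$ and adds one dimension (the variable $T_r$) to the deformation ring at that stage.

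Assembling the induction, at stage $n$ I would choose the new auxiliary prime $w_n$ (disjoint from $S_{n-1}$), set $S_n = S_{n-1} \cup \{w_n\} \cup (\text{obstruction-killing primes})$, produce $\rho_n$ lifting $\rho_{n-1}$ and ramified at $w_n$, and pass to the limit $\rho = \varprojlim \rho_n \colon G_{\mathbb{Q}} \to GL_2(\mathbb{W}[[T_1,T_2,\dots]])$, which by construction is ramified at the infinitely many primes $w_1, w_2, \dots$. Irreducibility of $\rho$ follows because it is ramified at a prime $w$ where, by the choice of local class, its image contains a nontrivial unipotent together with an element of $\phi$-type, so the image cannot be conjugate into the Borel; here I would lean on the span hypothesis and $\phi^2 \neq 1$ to rule out the residually reducible-but-split and dihedral degenerations.

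The main obstacle I anticipate is the interplay between two competing demands on the auxiliary primes: they must simultaneously kill the global $H^2$-obstruction \emph{and} support a local class that is consistent across \emph{every} level $n$ of the tower (the point on which \cite{HR} fails). Controlling this requires a careful choice of Frobenius classes — effectively choosing primes $w$ so that $\overline{\rho}(\mathrm{Frob}_w)$ has eigenvalue ratio exactly $\chi(\mathrm{Frob}_w)$ or its inverse in a way compatible with $\phi$ — and then verifying, via the local Euler characteristic at $w$ and the precise structure of $\mathrm{ad}^0\overline{\rho}$ as a $G_w$-module, that the desired class exists and deforms; the exclusions $\phi \neq \chi^{\pm1}$, $\phi^2 \neq 1$, and the list of forbidden $\overline{\rho}|_{G_p}$ are precisely the cases where this local analysis would otherwise break down, so the bulk of the work is a case-by-case local computation confirming that outside those cases everything goes through.
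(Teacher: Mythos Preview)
Your outline captures the overall inverse-limit strategy, but it misses the specific technical mechanism that makes the reducible case work, and there is a genuine gap at the heart of the inductive step.

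First, the target at stage $n$ is not $GL_2(\mathbb{W}/p^n)$ but $GL_2\bigl(\mathbb{W}[[T_1,\dots,T_n]]/(p,T_1,\dots,T_n)^n\bigr)$; the variables and the $p$-adic depth grow together, and the inductive step must control the \emph{dimension of the Selmer group} $H^1_N(G_{S_n},X)$, not just kill $H^2$. In particular, when you add ``obstruction-killing primes'' you must show they do \emph{not} increase $\dim H^1_N$, or else you acquire unwanted extra variables and lose control of the ring. The paper proves this as a separate lemma (its Lemma~4.1), and the argument is not automatic: it hinges on the precise way the auxiliary classes are constructed.

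Second, and relatedly, the paper does not add a single Chebotarev prime per step. It cannot: for a single trivial prime $v$ one can force the inertia value $h^v(\tau_v)$ to lie in the lower-left corner (this is what eventually gives irreducibility), but one has no control over $h^v(\sigma_v)$. The fix is to add a \emph{pair} $\{v_1,v_2\}$ and use the combination $h=-h^{v_1}+2h^{v_2}$, which lets one prescribe Frobenius values arbitrarily (this is Theorem~41 of [HR], quoted here as Theorem~3.3). Your single-prime plan would get stuck exactly here.

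Third, the substance of ``cohomology classes that work for all $\rho_n$'' lies in exploiting the filtration $U_1\subset U_2\subset U_3=X$ of $\mathrm{ad}^0\overline{\rho}$ by Galois-stable subspaces and the resulting eigenspace decomposition under $Gal(\mathbb{Q}(\phi,\mu_p)/\mathbb{Q})$. The paper's key Lemma~3.2 chooses trivial primes $v$ so that every $\beta\in H^1(G_{S\cup Q_1},X)$ restricts to zero at $v$ while a designated dual class $\psi$ does not, and produces a \emph{rank-3} global class $h^v$ with prescribed local behaviour. This rank-3 condition (i.e.\ $h^v$ not coming from $U_2$) is exactly what later forces the deformation out of the Borel; your proposed irreducibility argument via ``a unipotent plus a $\phi$-type element'' is not enough without first guaranteeing such a class exists, and the paper ultimately cites Corollary~43 of [HR] for irreducibility rather than arguing directly.

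In short: the strategy you describe is Ramakrishna's irreducible-case template, but the reducible case requires the filtration/eigenspace analysis, the two-prime trick, and the Selmer-dimension-stability lemma, none of which appear in your plan.
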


 We start with $\overline{\rho} : G_{\mathbb{Q}} \rightarrow GL_2 (\mathbb{F}_q)$ and by adding primes to the ramification we lift it successively to $\rho_n : G_{\mathbb{Q}, S_n} \rightarrow GL_2 (\mathbb{W}[[ T_1,...,T_n]] / (p, T_1,...,T_n)^n)$, and define $\rho = \displaystyle\lim_{\overleftarrow{n}} \rho_n$. If $R_n$ is the deformation ring of $\rho_n$ with $m_{R_n}$ its maximal ideal, then we see that $R_n / m^n_{R_n} = \mathbb{W}[[ T_1,...,T_n]] / (p, T_1,...,T_n)^n$. We will add more primes of ramification to $S_n$ and get a new set of primes $S_{n+1}$, such that the deformation ring associated to $S_{n+1}$ has $R_{n+1}/m^{n+1}_{R_{n+1}}$ as a quotient. This gives us a surjection from $R_{n+1} / m_{R_{n+1}}^{n+1} \twoheadrightarrow R_n /m_{R_n}^n$, which allows us to get the inverse limit $R = \displaystyle\lim_{\overleftarrow{n}} R_n/ m_{R_n}^n$.

\section{Notation}

We refer the reader to the notation used in \cite{HR} but briefly outline some definitions and notations here.

\begin{itemize}
\item $G_Z$ is the Galois group over $\mathbb{Q}$ of its maximal extension unramified outside a finite set of primes $Z$.
\item For $w \in Z$, $G_w = Gal (\overline{\mathbb{Q}}_w/ \mathbb{Q}_w)$, where $\mathbb{Q}_w$ is the completion of $\mathbb{Q}$ at $w$.
\item For a $G_{\mathbb{Q}} = Gal (\overline{\mathbb{Q}}/\mathbb{Q})$ module $M$, $\mathbb{Q}(M)$ is the field fixed by the subgroup of  $G_{\mathbb{Q}}$ that acts trivially on $M$.
\item The $\mathbb{G}_m$-dual of $M$ is denoted by $M^*$.
\item For $f \in H^1(G_{\mathbb{Q}}, M)$, we denote by $L_f$ the field fixed by the kernel of the homomorphism $f |_{Gal ({\overline{\mathbb{Q}}/\mathbb{Q}(M)})}$.
\item $X = Ad^0 (\overline{\rho})$ is the set of trace zero $2 \times 2$ matrices over $\mathbb{F}_q$ with Galois action through $\overline{\rho}$ by conjugation.
\item Let $K = \mathbb{Q} (X^*)$ which is equivalent to $ \mathbb{Q}(X,\mu_p)$.
\item For $w$ unramified in a Galois extension $L /\mathbb{Q}$ we denote a Frobenius at $w$ by $\sigma_w$.
\item $S$ is the set of primes containing $p,\infty$ and all those at which $\overline{\rho}$ is ramified.
\item For a character $\kappa : G_{\mathbb{Q}} \rightarrow \mathbb{F}_q^*$, we denote by $\mathbb{F}_q (\kappa)$ the module $\mathbb{F}_q$ with Galois action via $\kappa$.
\end{itemize}

\section{Trivial primes and the modification of $N_v$}

We modify the lemmas in \cite{R} for a residually reducible representation $\overline{\rho}: G_{\mathbb{Q}} \rightarrow GL_2(\mathbb{F}_q)$ using the language of  \cite{HR} and some ideas from \cite{CP}. The following lemmas are used in the next section to find sets of  primes that we add to the ramification to remove global obstructions, and cohomology classes associated to these new primes which we use to overcome local obstructions to lifting at each level $n$. The lemmas are adaptions of lemmas of Ramakrishna, so we show the modification and outline the rest of the argument.

\begin{definition}
Let $\overline{\rho}$ be as in the hypothesis of Theorem \ref{t1}. For $v$ unramified in $\overline{\rho}$ we say $v$ is a trivial prime if:

\begin{itemize}
\item $v$ is unramified in $\mathbb{Q}(\overline{\rho})$ and $\overline{\rho}(\sigma_v)$ is trivial, and
\item $v \equiv 1 \mod p$
\end{itemize}
\end{definition}

Since $\overline{\rho}$ is reducible, the Galois module $X=Ad^0(\overline{\rho})$ has a filtration of Galois stable $\mathbb{F}_q$-subspaces of the form $U_1 = \left(\begin{array}{cc}
                     0 & b \\
                    0 & 0\\
                  \end{array}
                \right), U_2  =\left(\begin{array}{cc}
                    a & b \\
                    0 & -a \\
                  \end{array}
                \right), U_3 =\left( \begin{array}{cc}
                    a & b \\
                    c & -a \\
                  \end{array}
                \right)$ , while the Galois module $X^*$ has a filtration of the $\mathbb{F}_q$-subspaces $V_1 = (X/U_2)^*, V_2 = (X/U_1)^*, V_3 = X^*$. For a subquotient $M$ of $X$ or $X^*$, the $\phi$, trivial, $\phi^{-1}$, $\chi \phi$, $\phi$, $\chi \phi^{-1}$ eigenspaces are the eigenspaces under the prime to $p$ action of $Gal(\mathbb{Q} (\phi, \mu_p)/\mathbb{Q})$ under a splitting of the long exact sequence 
                
                $$1 \rightarrow Gal(K/\mathbb{Q} (\phi, \mu_p)) \rightarrow Gal(K/\mathbb{Q}) \rightarrow Gal (\mathbb{Q} (\phi, \mu_p)/ \mathbb{Q}) \rightarrow 1$$

\begin{definition}
For any $M \in \{U_1, U_2, U_3, V_1, V_2, V_3\}$ and $Z$ a finite set of primes containing $S$ we define $\Sha_Z^i (M)$ to be the kernel of the map $H^i(G_Z, M) \rightarrow \oplus_{v \in Z} H^i(G_v, M)$.
\end{definition}                

\begin{definition}
Let $N_w$ be a subgroup of $ H^1(G_w, M)$ and let $N_w^*$ be its annihilator in $ H^1(G_w, M^*)$ under local Tate duality. Let $N = \{N_w \}_{w \in \mathbb{Z}_p}$. The Selmer group $H^1_N (G_Z, M)$ is the kernel of the restriction map:

$$ H^1(G_Z, M) \rightarrow \oplus_{w \in Z} H^1(G_w, M)/N_w$$ 

Let $N^* = \{ N^*_w \}_{w \in \mathbb{Z}_p}$. We define the dual Selmer group $H^1_{N^* }(G_Z, M^*)$ is the kernel of the other restriction map:

$$ H^1(G_Z, M^*) \rightarrow \oplus_{w \in Z} H^1(G_w, M^*)/N_w^*$$

\end{definition}

\begin{definition}
We say an element $f \in H^1(G_Z, X)$ (resp. $\psi \in H^1(G_Z, X^*)$) has rank $d$ if $d$ is the smallest number such that $f$ (resp. $\psi$) is in the image of the map $H^1(G_Z, U_d) \rightarrow H^1(G_Z, X)$ (resp. $H^1(G_Z, V_d) \rightarrow H^1(G_Z, X^*)$)
\end{definition}

\begin{lemma} \label{l1}
Let $M$ be any of the subspaces $\{U_1, U_2, U_3, V_1, V_2, V_3\}$, then there exists a finite set $Q_1$ of  trivial primes such that $\Sha^1_{S \cup Q_1}(M) = 0$.
\end{lemma}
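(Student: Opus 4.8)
The plan is to mimic the proof of the corresponding lemma of \cite{R}, the one substantive change being that the coefficient module $Ad^0(\overline{\rho})$ there is replaced by the subquotient $M$ of the filtered modules $X$, $X^*$, the cohomological bookkeeping being organized through the eigenspace decomposition under $\mathrm{Gal}(\mathbb{Q}(\phi,\mu_p)/\mathbb{Q})$ described above.

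\textbf{Reduction to killing one class.} Recall the monotonicity of $\Sha$: for finite sets $S\subseteq Z\subseteq Z'$ one has $\Sha^1_{Z'}(M)\subseteq \Sha^1_{Z}(M)$, since a class in $H^1(G_{Z'},M)$ that is locally trivial at every prime of $Z'$ is in particular unramified at the primes of $Z'\setminus Z$, hence comes from $H^1(G_Z,M)$ and lies in $\Sha^1_Z(M)$. As $H^1(G_S,M)$ is finite, it suffices to prove the following: if $Z\supseteq S$ is a finite set of trivial primes and $0\ne f\in \Sha^1_Z(M)$, then there is a trivial prime $v\notin Z$ with $\mathrm{res}_v f\ne 0$ in $H^1(G_v,M)$. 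Granting this, replacing $Z$ by $Z\cup\{v\}$ strictly shrinks $\Sha^1$ (a subspace, by monotonicity, not containing $f$), so after at most $\dim_{\mathbb{F}_p}\Sha^1_S(M)$ steps it vanishes; $Q_1$ is the set of primes so accumulated, and for all six modules at once one takes the union of the resulting finite sets, which still works by monotonicity.

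\textbf{Producing the prime.} Put $K_0=\mathbb{Q}(\overline{\rho},\mu_p)$; then $\mathbb{Q}(M)\subseteq K_0$, the group $G_{K_0}$ acts trivially on $M$, and a prime is trivial exactly when it splits completely in $K_0$. Since $f$ is unramified at any $v\notin Z$ and such a $\sigma_v$ acts trivially on $M$, we have $\mathrm{res}_v f\ne 0$ iff $f(\sigma_v)\ne 0$, i.e. iff $\sigma_v$ is nontrivial in $\mathrm{Gal}(L_f/\mathbb{Q}(M))\hookrightarrow M$. Applying Chebotarev to $L_f K_0/\mathbb{Q}$ produces such a $v$ as soon as $L_f\not\subseteq K_0$, equivalently $f|_{G_{K_0}}\ne 0$: one picks a $v$ whose Frobenius restricts to the identity on $K_0$ and to a nontrivial element of $\mathrm{Gal}(L_f/L_f\cap K_0)$. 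Now inflation--restriction together with the triviality of the $G_{K_0}$-action gives $\ker\!\big(H^1(G_Z,M)\to H^1(G_{K_0},M)\big)=H^1(\mathrm{Gal}(K_0/\mathbb{Q}),M)$, so the whole lemma reduces to showing that no nonzero class of $\Sha^1_Z(M)$ is inflated from $\mathrm{Gal}(K_0/\mathbb{Q})$.

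\textbf{The crux.} This last point, where the hypotheses on $\overline{\rho}$ are used, is the step I expect to be the real work. Write $1\to P\to \mathrm{Gal}(K_0/\mathbb{Q})\to \Delta\to 1$ with $P=\mathrm{Gal}(K_0/\mathbb{Q}(\phi,\mu_p))$ a $p$-group and $\Delta=\mathrm{Gal}(\mathbb{Q}(\phi,\mu_p)/\mathbb{Q})$ of order prime to $p$; since $M$ is $p$-torsion, $H^1(\Delta,-)$ vanishes on it and $H^1(\mathrm{Gal}(K_0/\mathbb{Q}),M)\hookrightarrow H^1(P,M)^\Delta$. Running through the filtrations $U_1\subset U_2\subset U_3$ and $V_1\subset V_2\subset V_3$, $P$ acts trivially on each graded piece, which is a line on which $\Delta$ acts through one of the characters $\phi,\,1,\,\phi^{-1}$ (resp. $\chi\phi,\,\chi,\,\chi\phi^{-1}$); the hypothesis that the $\mathbb{F}_p$-span of the image of $\phi$ is all of $\mathbb{F}_q$ lets one pass to $\mathbb{F}_p$-coefficients on each piece, and $\phi^2\ne 1$, $\phi\ne\chi^{\pm 1}$ force these graded characters to be pairwise distinct and nontrivial where needed, so that the long exact sequences of the filtration reduce matters to $\Sha$-type groups with one-dimensional coefficients $\mathbb{F}_p(\psi)$, $\psi\in\{\phi^{\pm1},1,\chi\phi^{\pm1},\chi\}$, treated exactly as in \cite{R}. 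The global engine is that $\mathrm{Gal}(K_0/\mathbb{Q})$ is generated by the inertia groups $I_w$, $w\in S$ (as $K_0$ is unramified outside $S$ and $\mathbb{Q}$ has no nontrivial everywhere-unramified extension), so a class whose inflation lands in $\Sha^1_Z(M)$ restricts to $0$ on a generating family of subgroups; the only class that genuinely demands care is the $\psi=\phi$ one, realized by the extension class $*\in H^1(G_{\mathbb{Q}},U_1)$ of $\overline{\rho}$, which is nonzero because $\overline{\rho}$ is indecomposable and is visibly inflated from $\mathrm{Gal}(K_0/\mathbb{Q})$ --- but the local hypothesis at $p$, excluding precisely the shapes of $\overline{\rho}|_{G_p}$ that would make the restriction of $*$ to $G_p$ vanish, together with the fact that $\overline{\rho}$ (hence $*$) is ramified somewhere in $S$, guarantees that $*\notin\Sha^1_S(U_1)$. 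The delicate part is exactly this case-by-case matching of each eigenspace of each $M\in\{U_1,U_2,U_3,V_1,V_2,V_3\}$ against the excluded local behaviours at $p$ and the ramified primes; everything else is formal.
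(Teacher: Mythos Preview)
Your plan is essentially the same as the paper's: both arguments reduce to killing one nonzero $\Sha$-class at a time by choosing, via Chebotarev, a trivial prime at which the class is nonzero, and both identify the obstruction to doing so as the possibility that the class is inflated from the Galois group of a small splitting field. The only real difference is how that obstruction is dispatched. The paper works with $\mathbb{Q}(M)$ rather than your $K_0=\mathbb{Q}(\overline{\rho},\mu_p)$ and simply invokes Proposition~8 of \cite{HR} to get $H^1(\mathrm{Gal}(\mathbb{Q}(M)/\mathbb{Q}),M)=0$ outright; inflation--restriction then shows $L_\psi$ is a nontrivial extension of $\mathbb{Q}(M)$, and the Chebotarev step goes through (the argument is packaged as Proposition~13 of \cite{HR}). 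You instead try to verify directly, via the eigenspace decomposition under $\mathrm{Gal}(\mathbb{Q}(\phi,\mu_p)/\mathbb{Q})$ and the filtration, that no nonzero $\Sha$-class is inflated from $\mathrm{Gal}(K_0/\mathbb{Q})$. That is a legitimate route, and working over $K_0$ rather than $\mathbb{Q}(M)$ is in fact cleaner for the Chebotarev step since trivial primes are exactly those splitting in $K_0$; but your ``crux'' paragraph is at present a sketch rather than a proof---in particular, the step ``a class restricting to zero on a generating family of inertia subgroups is zero'' is not valid for $H^1$ in general, and your treatment of the extension class $*$ for $M=U_1$ does not automatically cover the other five modules. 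If you want a self-contained argument you should carry out the eigenspace bookkeeping in full; otherwise, citing \cite{HR} as the paper does is the efficient path.
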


\begin{proof}

We mimic the proof of Prop 13 of \cite{HR} and outline the argument.

Let $\psi \in H^1 (G_{\mathbb{Q}}, X)$ and $L_{\psi}$ be the field fixed by the kernel of $\psi$. Let $P$ be the subgroup of $G_{\mathbb{Q}}$ that acts trivially on $M$ and $H= Gal(L_{\psi}/ \mathbb{Q})$. By Proposition 8 of \cite{HR}, we can assume that $H^1(Gal(\mathbb{Q}(M)/\mathbb{Q}),M)$ is trivial. We have the inflation-restriction sequence:
 
 $$0 \rightarrow H^1(H/P, X^P) \rightarrow H^1(H, M) \rightarrow H^1 (P, M)^{H/P}$$
 
Now, $H/P = Gal(\mathbb{Q}(M)/\mathbb{Q})$ and $P$ acts trivially on $X$, so $H^1(H/P, M^P) = H^1(Gal(\mathbb{Q}(M)/\mathbb{Q}),M)$ which was assumed to be trivial. So a non-trivial $\psi \in H^1(H,M)$ gives rise to a non-trivial element of $H^1 (P, M)^{H/P}$ which is $ Hom(P,M)^{H/P}$ as $P$ acts trivially on $M$. This shows that $L_{\psi}$ is a non-trivial extension of $\mathbb{Q}(M)$. If $\psi \in \Sha^1_S (M)$, then we choose a trivial prime $q$ such that it splits completely from $\mathbb{Q}$ to $\mathbb{Q}(M)$ but not from $\mathbb{Q}(M)$ to $L_{\psi}$, which means that $\psi |_{G_{q}} \ne 0$, so $\psi \notin \Sha^1_{S \cup \{q\}}(M)$. As $H^1(G_S, M)$ is finite, we repeat this procedure and get a finite set of trivial primes $Q_1$ such that $\Sha^1_{S \cup Q_1}(M) =0$.

\end{proof}

Let $(z_v)_{v \in S \cup Q_1} \notin \oplus_{v \in S \cup Q_1} N_v \oplus \psi_{S \cup Q_1} (H^1(G_{S \cup Q_1}, X))$ be a set of cohomology classes which we will use eventually to overcome obstructions to lifting in the next section.

\begin{lemma} \label{l2}
Let $Q_1$ be as in lemma \ref{l1}. There exists a Cebotarev class $L$ of trivial primes such that 

\begin{itemize}
\item $\beta |_{G_v}=0$ for all $\beta \in H^1 (G_{S \cup Q_1}, U_i^*)$ for $i=1,2$ and for all $\beta \in H^1(G_{S \cup Q_1}, X)$
\item There exists an $\mathbb{F}_p$-basis $\{ \psi,\psi_1,..,\psi_r \}$ of $H^1(G_{S \cup Q_1},X^*)$ such that $\{\psi_1,..,\psi_r \}$ is a basis of $\psi_{S \cup Q_1}^{*-1}(Ann(z_{w})_{w \in S \cup Q_1})$, $\psi |_{G_v} \ne 0$ and $\psi_i |_{G_v} = 0$ for all $i \geq 1$.
\end{itemize}
Furthermore, there is for each $v \in L$, a rank $3$ element $h^v \in H^1 (G_{S \cup Q_1 \cup \{v\}}, X)$ and a decomposition group above $v$ such that $h^v |_{G_w} =(z_w)_{w \in {S \cup Q_1}}$ and $h^v(\tau_v) = \left(  \begin{array}{cc}
                    0 & 0 \\
                    s & 0 \\
                  \end{array}
                  \right)$ with $s \ne 0$.
\end{lemma}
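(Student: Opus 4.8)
The plan is to realize all the conditions on $v$ as a single Chebotarev condition in a suitable finite Galois extension of $\mathbb{Q}$, and then to produce $h^v$ by Poitou--Tate duality once $v$ has been fixed. First I would pin down the data on the dual side: since $(z_w)_{w\in S\cup Q_1}\notin\bigoplus_v N_v\oplus\psi_{S\cup Q_1}(H^1(G_{S\cup Q_1},X))$ and (by Poitou--Tate) $\psi_{S\cup Q_1}(H^1(G_{S\cup Q_1},X))$ is the annihilator of $\psi^*_{S\cup Q_1}(H^1(G_{S\cup Q_1},X^*))$, the linear functional $\xi\mapsto\sum_w\langle z_w,\xi|_{G_w}\rangle$ on $H^1(G_{S\cup Q_1},X^*)$ is non-zero, so $\psi^{*-1}_{S\cup Q_1}(Ann(z_w)_{w\in S\cup Q_1})$ has codimension exactly one. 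Fix an $\mathbb{F}_p$-basis $\{\psi_1,\dots,\psi_r\}$ of it and a complementary class $\psi$, so that $\sum_w\langle z_w,\psi|_{G_w}\rangle=:c\ne 0$. Let $\mathcal{M}$ be the compositum of $\mathbb{Q}(\overline{\rho})$, of $K=\mathbb{Q}(X^*)\supseteq\mathbb{Q}(\mu_p)$, of the fields $L_\beta$ as $\beta$ runs over fixed $\mathbb{F}_p$-bases of $H^1(G_{S\cup Q_1},U_1^*)$, $H^1(G_{S\cup Q_1},U_2^*)$ and $H^1(G_{S\cup Q_1},X)$, and of $L_{\psi_1},\dots,L_{\psi_r},L_\psi$. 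I would then seek a Frobenius $\sigma_v$ in $Gal(\mathcal{M}/\mathbb{Q})$ that acts trivially on $\mathbb{Q}(\overline{\rho})$ and $\mathbb{Q}(\mu_p)$ (making $v$ a trivial prime), trivially on every $L_\beta$ and every $L_{\psi_i}$, non-trivially on $L_\psi$, and whose value $\psi|_{G_v}$, viewed in $H^1_{nr}(G_v,X^*)\cong X^*$, is moreover prescribed so as not to annihilate the line $\left(\begin{smallmatrix}0&0\\ *&0\end{smallmatrix}\right)\subseteq X$ (this last requirement is needed only for the construction of $h^v$). The Chebotarev class $L$ then consists of the primes whose Frobenius lies in the resulting non-empty union of conjugacy classes.

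The crux is to check that these conditions are consistent — concretely, that $\psi$ restricts non-trivially to $Gal(\overline{\mathbb{Q}}/\mathcal{M}_0)$, where $\mathcal{M}_0$ is the compositum of all the above fields except $L_\psi$, and that even inside $L_\psi$ the value $\psi|_{G_v}$ can be steered into the required subset of $X^*$. I would run the argument of Lemma \ref{l1} and of Prop.\ 13 of \cite{HR}: by Prop.\ 8 of \cite{HR} one may assume $H^1(Gal(\mathbb{Q}(M)/\mathbb{Q}),M)=0$ for each relevant subquotient $M$, so that every non-zero class in $H^1(G_{S\cup Q_1},M)$ restricts to a non-zero homomorphism on $Gal(\overline{\mathbb{Q}}/\mathbb{Q}(M))$; since $\Sha^1_{S\cup Q_1}(M)=0$ for all these $M$ by Lemma \ref{l1}, the restrictions of the chosen basis classes remain $\mathbb{F}_p$-linearly independent in the relevant $Hom$ spaces. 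The non-semisimplicity of $X$ and $X^*$ is handled by decomposing into eigenspaces for the prime-to-$p$ quotient of $Gal(\mathbb{Q}(\phi,\mu_p)/\mathbb{Q})$, as before Lemma \ref{l1}: the graded pieces of $X$ are $\mathbb{F}_q(\phi)$, $\mathbb{F}_q$, $\mathbb{F}_q(\phi^{-1})$ and those of $X^*$ are $\mathbb{F}_q(\chi\phi)$, $\mathbb{F}_q(\chi)$, $\mathbb{F}_q(\chi\phi^{-1})$, and the hypotheses $p\ge 3$, $\phi^2\ne 1$, $\phi\ne\chi^{\pm1}$ together with the local conditions at $p$ force these characters to be distinct in precisely the configurations that would otherwise produce an unwanted containment $L_\psi\subseteq\mathcal{M}_0$ or obstruct the prescription of $\psi|_{G_v}$. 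I expect this disjointness step to be the main obstacle: it is where the non-semisimple structure genuinely enters and where all the arithmetic hypotheses on $\phi$ are consumed.

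Once $v$ has been chosen, I build $h^v$ with $Z=S\cup Q_1\cup\{v\}$ by Poitou--Tate: the image of $H^1(G_Z,X)\to\bigoplus_{w\in Z}H^1(G_w,X)$ is the annihilator, under the sum of the local Tate pairings, of the image of $H^1(G_Z,X^*)\to\bigoplus_{w\in Z}H^1(G_w,X^*)$. I would exhibit in this annihilator the tuple whose $w$-component is $z_w$ for $w\in S\cup Q_1$ and whose $v$-component is a ramified class with $\tau_v\mapsto\left(\begin{smallmatrix}0&0\\ s&0\end{smallmatrix}\right)$ plus a suitable unramified part. Testing against $\xi\in H^1(G_{S\cup Q_1},X^*)$ (unramified at $v$): since $\psi_i|_{G_v}=0$ and the unramified part of the $v$-component pairs trivially with an unramified $\xi|_{G_v}$, the requirement collapses to the single equation $c+s\cdot\psi|_{G_v}\!\left(\left(\begin{smallmatrix}0&0\\ 1&0\end{smallmatrix}\right)\right)=0$, which is solvable with $s\ne 0$ exactly because $\psi|_{G_v}$ was chosen not to annihilate that line. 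Testing against the classes ramified at $v$: one adjusts the unramified part of the $v$-component, and the consistency of these adjustments reduces, by linearity and $\psi_i|_{G_v}=0$, back to the equation just solved; the hypothesis $\beta|_{G_v}=0$ for $\beta\in H^1(G_{S\cup Q_1},X)$ ensures in addition that adding $v$ does not disturb the classes used elsewhere. Any preimage $h^v$ of this tuple satisfies $h^v|_{G_w}=z_w$ on $S\cup Q_1$ and $h^v(\tau_v)=\left(\begin{smallmatrix}0&0\\ s&0\end{smallmatrix}\right)$; finally, a cocycle cohomologous to one valued in $U_2$ restricts to inertia at $v$ inside $U_2$ (the $G_v$-action on $X$ being trivial at a trivial prime), so $h^v(\tau_v)\notin U_2$ forces $h^v$ to have rank $3$, completing the lemma. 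This last, Poitou--Tate, step is routine given the care taken in the choice of $v$.
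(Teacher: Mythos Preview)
Your approach is essentially the same as the paper's: encode all the constraints on $v$ as a single Chebotarev condition, verify compatibility via the eigenspace decomposition under $Gal(\mathbb{Q}(\phi,\mu_p)/\mathbb{Q})$, and then produce $h^v$ by Poitou--Tate duality. The paper's own proof is in fact much terser than yours --- it is written as a one-paragraph delta from Proposition~34 of \cite{HR}, observing only that the new requirement $\beta|_{G_v}=0$ for $\beta\in H^1(G_{S\cup Q_1},X)$ amounts to additional \emph{splitting} conditions in the $\phi$, $1$, $\phi^{-1}$ eigenspaces, and that these are disjoint from the sole \emph{non-splitting} condition (for $\psi|_{G_v}\ne 0$), which lives in the $\chi\phi$ eigenspace of $Gal(K_\psi/K)$; the construction of $h^v$ and the verification that it has rank $3$ are left entirely to \cite{HR}. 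Your write-up supplies much of what the paper suppresses, and you correctly flag the eigenspace disjointness as the crux; the one point you leave slightly implicit, and which the paper also offloads to \cite{HR}, is why the complementary class $\psi$ can be chosen so that the non-splitting occurs specifically in the $\chi\phi$ component (equivalently, so that $\psi|_{G_v}$ pairs non-trivially with $\left(\begin{smallmatrix}0&0\\ *&0\end{smallmatrix}\right)$) while the $\psi_i$ can simultaneously be made trivial there --- this is exactly what Proposition~34 of \cite{HR} arranges.
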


\begin{proof}
The difference between the above lemma and Prop 34 of \cite{HR} is that we have added the additional condition of $\beta |_{G_v} = 0$ for all $\beta \in H^1(G_{S \cup Q_1}, X)$, where $X$ corresponds to $U_3$ in the notation above. This means that we need the prime $v$ to split completely in the $\phi, \phi^{-1}$ and identity eigenspaces which are disjoint from the $\chi /\phi$ eigenspace of $U_1^*$ and the $\chi / \phi$ and $\chi$ eigenspaces of $U_2^*$.  The modified definition of trivial primes imposes only splitting conditions and the only non-splitting condition in the hypothesis above is in the $\chi \phi$ eigenspace of $Gal(K_{\psi}/K)$, none of which are in $U_1^*, U_2^*$ and $X$. Now, following the argument of Prop 34 of \cite{HR} we see that $v$ comes from a Cebotarev condition.
\end{proof}

As $h^v(\tau_v) = \left(  \begin{array}{cc}
                    0 & 0 \\
                    s & 0 \\
                  \end{array}
                  \right)$ with $s \ne 0$, we define the sets $C_v$ and $N_v$ of \cite{CP}, to be the conjugates by the matrix $ \left(  \begin{array}{cc}
                    0 & 1 \\
                    1 & 0 \\
                  \end{array}
                  \right)$ for the primes $v$ that we add. 

We cannot control the behavior of $h^v$ at $\sigma_v$, so we add a pair of primes $v_1, v_2$ such that $h = -h^{v_1} +2 h^{v_2}$ has the appropriate image at Frobenius and $h |_{G_w} = z_w$ for $w \in S \cup Q_1$. Altering the definition of trivial primes still allows us to use the same techniques of \cite{HR} so we can use the following result (Theorem 41 of \cite{HR}).

\begin{theorem} \label{t3}
There is a set of two primes $\{v_1,v_2\}$ coming from the Cebotarev class $L$ in the previous lemma such that for $h = -h^{v_1} +2 h^{v_2}$ we can choose the values of $h(\sigma_{v_i})$ arbitrarily for $i=1,2$.
\end{theorem}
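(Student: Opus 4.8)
The plan is to run the argument of Theorem~41 of \cite{HR}, the point being that the modified notion of trivial prime and the slightly enlarged splitting conditions imposed in Lemma~\ref{l2} were arranged precisely so that this argument goes through unchanged. Two features of the particular combination $h=-h^{v_1}+2h^{v_2}$ make it the right object. First, the coefficients sum to $1$, so for every $w\in S\cup Q_1$ we have $h|_{G_w}=-h^{v_1}|_{G_w}+2h^{v_2}|_{G_w}=-z_w+2z_w=z_w$; thus $h$ retains the prescribed behaviour at the old primes no matter how $v_1,v_2$ are chosen. Second, since $p\geq3$ both $-1$ and $2$ are units in $\mathbb{F}_p$, and since $h^{v_2}$ is unramified at $v_1$ while $h^{v_1}$ is unramified at $v_2$, we get $h|_{I_{v_1}}=-h^{v_1}|_{I_{v_1}}$ and $h|_{I_{v_2}}=2h^{v_2}|_{I_{v_2}}$; hence $h(\tau_{v_i})$ is still a nonzero scalar multiple of $\left(\begin{array}{cc}0&0\\ s&0\end{array}\right)$, so $h$ is genuinely ramified at each $v_i$ and lands in the conjugated local conditions $C_{v_i},N_{v_i}$.

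It remains to control the two Frobenius values, and here a single auxiliary prime does not suffice, because the construction of $h^v$ in Lemma~\ref{l2} pins down only $h^v|_{I_v}$ and the restrictions of $h^v$ to $S\cup Q_1$, not $h^v(\sigma_v)$. The pair trick converts the uncontrolled quantity for one prime into a quantity attached to the other prime, which is already fixed once that prime is chosen, hence amenable to a Chebotarev argument. Concretely I would first fix $v_1$ in the class $L$, producing $h^{v_1}\in H^1(G_{S\cup Q_1\cup\{v_1\}},X)$, the finite extension it cuts out over $K$, and the a priori uncontrolled value $a_1=h^{v_1}(\sigma_{v_1})\in X$ at a chosen lift of Frobenius. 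Next I would choose $v_2$ from a Chebotarev subclass of $L$ inside the compositum of that extension with the previously relevant fields, so that $h^{v_1}(\sigma_{v_2})$ equals any prescribed value; the extra vanishing condition built into Lemma~\ref{l2}, namely $\beta|_{G_v}=0$ for all $\beta\in H^1(G_{S\cup Q_1},X)$ (the new ingredient over Prop.~34 of \cite{HR}), together with $\psi|_{G_v}\neq0$ and $\psi_i|_{G_v}=0$, is exactly what keeps this subclass nonempty and makes the image of $h^{v_1}$ on the relevant decomposition groups large enough. With $v_2$ fixed we also fix $a_2=h^{v_2}(\sigma_{v_2})$ and $h^{v_2}(\sigma_{v_1})$, and then
\[
h(\sigma_{v_1})=-a_1+2\,h^{v_2}(\sigma_{v_1}),\qquad h(\sigma_{v_2})=-h^{v_1}(\sigma_{v_2})+2\,a_2,
\]
where the residual freedom in the choice of $v_2$ moves $h^{v_2}(\sigma_{v_1})$ and $h^{v_1}(\sigma_{v_2})$ independently enough that, $-1$ and $2$ being invertible, both right-hand sides can be set to any prescribed elements of $N_{v_i}$.

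The main obstacle is the entanglement just flagged: because $h^{v_1}$ itself depends on $v_1$, the Chebotarev conditions for $v_1$ and $v_2$ are not independent a priori, and one must check they can be met simultaneously; this is precisely why the hypotheses of Lemma~\ref{l2} were strengthened beyond Prop.~34 of \cite{HR}. Granting that disjointness and non-vanishing input, the remaining bookkeeping is identical to that in Theorem~41 of \cite{HR}.
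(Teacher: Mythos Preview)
Your proposal is correct and follows exactly the route the paper takes: the paper does not give a proof of this theorem at all, but simply remarks just before the statement that ``altering the definition of trivial primes still allows us to use the same techniques of \cite{HR} so we can use the following result (Theorem~41 of \cite{HR}).'' Your sketch unpacks this citation, explaining why the coefficients $-1,2$ (summing to $1$, both units for $p\geq 3$) preserve the prescribed restrictions at $S\cup Q_1$ and the ramification shape at $v_1,v_2$, and why the strengthened splitting conditions of Lemma~\ref{l2} were imposed so that the Chebotarev bookkeeping of \cite{HR} goes through; this is more detail than the paper itself provides, but the approach is identical.
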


\section{Main theorem and its proof}

Let $C_l$ be the set of deformation classes of $\overline{\rho}$ to $\mathbb{W}$ satisfying 

$ \rho(\sigma_l) = \left(
                  \begin{array}{cc}
                    l & 0 \\
                    0 & 1 \\
                  \end{array}
                \right)$                and $ \rho(\tau_l) = \left(
                  \begin{array}{cc}
                    1 & * \\
                    0 & 1 \\
                  \end{array}
                \right)$

We define $u_1, u_2 \in H^1 (G_l,X)$ by:

$ u_1 (\sigma_l) = \left(
                  \begin{array}{cc}
                    0 & 1 \\
                    0 & 0 \\
                  \end{array}
                \right)$ and $ u_1 (\tau_l) = \left(
                  \begin{array}{cc}
                    0 & 0 \\
                    0 & 0 \\
                  \end{array}
                \right)$

$ u_2 (\sigma_l) = \left(
                  \begin{array}{cc}
                    0 & 0 \\
                    0 & 0 \\
                  \end{array}
                \right)$ and $ u_2 (\tau_l) = \left(
                  \begin{array}{cc}
                    0 & 1 \\
                    0 & 0 \\
                  \end{array}
                \right)$
                
                Note that these two cohomology classes are the same as in \cite{HR}. We refer the reader to the calculations of Lemma 4.1 in \cite{CP} to produce the third cohomology class $u_3$ to get a three dimensional subspace $N_l$ which preserves $C_l$.

Recall that $R_n$ is the deformation ring of $\rho_n$ with $m_{R_n}$ its maximal ideal. We assume that there exists $\rho_n : G_{S_n} \rightarrow GL_2 (R_n / m^n_{R_n})$, $\Sha^2_{S_n}(X) =0$ and $\dim H^1_N (G_{S_{n}}, X) = n$. By theorem \cite{t3} we can find a set of primes $B$ such that $\dim H^1_N (G_{S_n \cup B}, X) = n +1$ (we simply choose the $\alpha_i \in C_{v_i}$ in the proof of theorem \ref{t3}). Let $U$ be the deformation ring and $\rho_U$ be the deformation associated to the augmented set $S_n \cup B$, with the deformation conditions ($N_v, C_v$). If $B$ consists of primes such that $\rho_n |_{G_{v}} \in C_v$ for $v \in B$, we have a surjection $ \phi : U \twoheadrightarrow  R_n /m_{R_n}^n$ and we follow the argument as in \cite{R} or \cite{P}.

If $\rho_n |_{G_{v}} \notin C_v$ for $v \in B$, then we choose a set of cohomology classes $(z_{v})_{v \in S_n \cup B}$ such that the action of $z_{v}$ on $\rho_{n}|_{G_{v}}$ overcomes the local obstructions at $v \in S_n \cup B$. By theorem \ref{t3}  we can find a set $A$ of two primes and a cohomology class $h$ such that:

\begin{itemize}
 \item $\tilde{\rho_n} = (I+p^n h)\rho_n |_{G_q} \in C_q $  for $q \in A$ (no new obstructions at $A$)
 \item $h |_{G_{v}} = z_{v}$, for $v \in S_n \cup B$  ($h$ overcomes local obstructions at $S_n \cup B$)
\end{itemize}

We now show that adding this set of primes $A$ does not alter the dimension of the Selmer groups, hence does not add more variables to the ring of power series.

\begin{lemma} \label{l4}
For a set $A = \{v_1, v_2\}$ of two primes chosen as in theorem \ref{t3}  and $(z_v)_{v \in S_n \cup B} \notin \oplus_{v \in S_n \cup B} N_v \oplus \psi_{S_n \cup B} (H^1(G_{S_n \cup B}, X))$ we have $H^1_N(G_{S_n \cup B}, X) = H^1_N (G_{S_n \cup B \cup A}, X)$.
 \end{lemma}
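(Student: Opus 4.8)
The plan is to compare the two Selmer groups via the standard Greenberg–Wiles / Poitou–Tate exact sequence, and show that enlarging the ramification set from $S_n\cup B$ to $S_n\cup B\cup A$ changes neither $\dim H^1_N(G_{S_n\cup B},X)$ nor $\dim H^1_{N^*}(G_{S_n\cup B},X^*)$. First I would recall the Wiles formula
\[
\frac{\#H^1_N(G_Z,X)}{\#H^1_{N^*}(G_Z,X^*)}=\#H^0(G_{\mathbb{Q}},X)\cdot\#H^0(G_{\mathbb{Q}},X^*)^{-1}\cdot\prod_{w\in Z}\frac{\#N_w}{\#H^0(G_w,X)},
\]
and note that for the two trivial primes $v_1,v_2\in A$ the local conditions $N_{v_i}=N_{v_i}$ are exactly those of \cite{CP} with $\dim N_{v_i}=\dim H^1(G_{v_i},X)^{+}=\dim H^0(G_{v_i},X)$ (the factor $\#N_{v_i}/\#H^0(G_{v_i},X)=1$), so the right-hand side is unchanged in passing from $S_n\cup B$ to $S_n\cup B\cup A$. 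Hence it suffices to prove one of the two groups does not grow; I would prove $H^1_{N^*}(G_{S_n\cup B\cup A},X^*)=H^1_{N^*}(G_{S_n\cup B},X^*)$, from which the equality of the $H^1_N$'s follows.

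Next I would make the inclusion $H^1_{N^*}(G_{S_n\cup B\cup A},X^*)\hookrightarrow H^1_{N^*}(G_{S_n\cup B},X^*)$ explicit: an element $\psi'$ of the larger dual Selmer group is, a priori, only an element of $H^1(G_{S_n\cup B\cup A},X^*)$ satisfying the old local conditions away from $A$ and lying in $N_{v_i}^*$ at each $v_i\in A$. The point is that $N_{v_i}^*$ has been arranged (via the conjugated $C_v,N_v$ of \cite{CP}, built from the rank-$3$ class $h^{v_i}$ of Lemma \ref{l2}) so that the only way a class unramified outside $S_n\cup B$ can acquire ramification at $v_i$ is by pairing nontrivially against $h^{v_i}(\tau_{v_i})=\left(\begin{smallmatrix}0&0\\s&0\end{smallmatrix}\right)$; but by the first bullet of Lemma \ref{l2} every $\beta\in H^1(G_{S_n\cup B},U_i^*)$ for $i=1,2$ and every $\beta\in H^1(G_{S_n\cup B},X)$ restricts to $0$ at $v_i$, and by the second bullet the basis $\{\psi_1,\dots,\psi_r\}$ of $\psi_{S_n\cup B}^{*-1}(\mathrm{Ann}(z_w))$ also restricts to $0$ there. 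So I would show that any $\psi'$ in the enlarged dual Selmer group has trivial restriction to $G_{v_1}$ and $G_{v_2}$, hence by the inflation–restriction sequence
\[
0\to H^1(G_{S_n\cup B},X^*)\to H^1(G_{S_n\cup B\cup A},X^*)\to\bigoplus_{v\in A}H^1(G_v,X^*)
\]
$\psi'$ is inflated from $G_{S_n\cup B}$; combined with the old local conditions it lies in $H^1_{N^*}(G_{S_n\cup B},X^*)$. The reverse inclusion is immediate since the old local conditions at $v_i$ are contained in $N_{v_i}^*$ by construction (an unramified-at-$v_i$ class automatically satisfies the $C_v$-condition of \cite{CP}).

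The key computation underlying the previous paragraph, and the one I expect to be the main obstacle, is the careful bookkeeping of eigenspaces: one must verify that the single non-splitting (Chebotarev) condition defining the class $L$ in Lemma \ref{l2} lives in the $\chi\phi$-eigenspace of $\mathrm{Gal}(K_\psi/K)$, which is disjoint from all the eigenspaces relevant to $U_1^*$, $U_2^*$ and $X=U_3$, so that restriction-to-$G_{v_i}$ of every class we care about is forced to vanish while $h^{v_i}$ itself remains ramified at $v_i$. This is where the hypotheses $\phi^2\neq1$, $\phi\neq\chi^{\pm1}$, and the $\mathbb{F}_p$-span condition on the image of $\phi$ enter, guaranteeing the eigenspace decomposition is as clean as claimed and that no accidental coincidences collapse the disjointness. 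Once that eigenspace disjointness is in hand, the rest is the exact-sequence chase above together with the invariance of the Wiles product, and the equality of the two $H^1_N$'s follows by comparing dimensions.
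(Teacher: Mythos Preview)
Your approach is genuinely different from the paper's, and as written it has a real gap. The paper does \emph{not} pass to the dual Selmer group or invoke the Wiles formula at all; it argues directly on $H^1_N(\,\cdot\,,X)$. The easy inclusion $H^1_N(G_{S_n\cup B},X)\subseteq H^1_N(G_{S_n\cup B\cup A},X)$ comes from the first bullet of Lemma~\ref{l2} (every $\beta\in H^1(G_{S_n\cup B},X)$ has $\beta|_{G_{v_i}}=0\in N_{v_i}$). For the reverse inclusion the paper writes an arbitrary element of the larger Selmer group as $f+\alpha_1 h^{v_1}+\alpha_2 h^{v_2}$ with $f\in H^1(G_{S_n\cup B},X)$, uses the hypothesis $(z_v)\notin\bigoplus N_v\oplus\psi_{S_n\cup B}(H^1(G_{S_n\cup B},X))$ to force $\alpha_1+\alpha_2=0$, and then uses the explicit values of $h^{v_i}$ at $\sigma_{v_i}$ (supplied by Theorem~\ref{t3}) in a short case analysis to force $\alpha_1=0$.

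The gap in your argument is the step ``any $\psi'$ in the enlarged dual Selmer group has trivial restriction to $G_{v_1}$ and $G_{v_2}$.'' The first bullet of Lemma~\ref{l2} controls restrictions of classes in $H^1(G_{S\cup Q_1},U_i^*)$ for $i=1,2$ and in $H^1(G_{S\cup Q_1},X)$, \emph{not} in $H^1(\,\cdot\,,X^*)$; and the second bullet only says $\psi_i|_{G_v}=0$ for the basis of the annihilator of $(z_w)$, while explicitly $\psi|_{G_v}\neq 0$. You never explain why a class in $H^1_{N^*}(G_{S_n\cup B\cup A},X^*)$ must lie in the span of the $\psi_i$'s rather than involve $\psi$, nor do you ever actually use the hypothesis on $(z_v)$, which in the paper's argument is precisely what kills the $\alpha_1+\alpha_2$ coefficient. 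Your ``reverse inclusion is immediate'' is also suspect: an unramified class at $v_i$ lands in $N_{v_i}^*$ only if $N_{v_i}$ is contained in the unramified subspace, but $N_{v_i}$ contains the (conjugated) ramified class $u_2$. So while the Wiles-formula reduction to the dual side is a reasonable idea, the substantive work---controlling what happens at $v_1,v_2$---has not been done, and the paper's direct argument with the explicit $h^{v_i}$ is exactly what fills that hole.
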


\begin{proof}

We adapt the argument of Prop 4.1 of \cite{R}.

Recall that in Lemma \ref{l2} the trivial primes $v$ were chosen so that $\beta \in H^1 (G_T, X) \Rightarrow \beta |_{G_v} =0$. As $N_v$ is a three dimensional subspace including the zero cocyle $u_2$, we see that $\beta |_{G_v} =0 \Rightarrow \beta \in N_v$. Thus, $H^1_{N} (G_{S_n \cup B }, X) \subset H^1_{N} (G_{S_n \cup B \cup A}, X)$. 

Any element of $H^1_{N} (G_{S_n \cup B \cup A}, X) \setminus H^1_{N} (G_{S_n \cup B}, X)$ necessarily looks like $f + \alpha_1 h^{v_1} + \alpha_2 h^{v_2}$, where $f \in H^1 (G_{S_n \cup B}, X)$ and $h^{v_i}$ are as in theorem \ref{t3}. Since $\alpha_1 h^{v_1} + \alpha_2 h^{v_2} |_{G_v} = (\alpha_1 + \alpha_2) z_v \in f +N_v$ for $v \in S_n \cup B \cup A $, we see that $\alpha_1 + \alpha_2 = 0$. We know that $\alpha_1 (h^{v_1} - h^{v_2})|_{G_{q}}= 0$ for all $q \in S_n \cup B$ which means that $f|_{G_{q}} \in N_{q}$ for all $q \in S_n \cup B$ $\Rightarrow f \in H^1_N (G_{S_n \cup B}, X)$. We also know that $f|_{G_{v_{i}}} = 0$ for $i = 1,2$, so $f \in H^1_N (G_{S_n \cup B \cup A}, X)$. Thus, $\alpha_1 (h^{v_1} - h^{v_2}) \in H^1_N (G_{S_n \cup B \cup A}, X) \Rightarrow \alpha_1 ( h^{v_1} - h^{v_2} ) |_{G_{v_i}} \in N_{v_i}$, for $i = 1,2$.  We now look at the construction of the $h^{v_i}$ in the proof of the previous lemma to get a contradiction.

If $\rho_n |_{G_{v_i}} \notin C_{v_i}$, we choose $h |_{G_{v_i}} = -h^{v_1} + 2 h^{v_2} \notin N_{v_i}$ for $i=1,2$. This implies that $-A+2E \notin N_{v_1}$ while we have that $ h^{v_1} - h^{v_2}  |_{G_{v_i}} \in N_{v_1} \Rightarrow A-E \in N_{v_1}$. Combining these two conditions we get that $A, E \notin N_{v_1}$ so $\alpha_1 =0$ and $f \in H^1_N (G_{S_n \cup B \cup A}, X)$  which is a contradiction. A similar argument works for $N_{v_2}$.

If $\rho_n |_{G_{v_i}} \in C_{v_i}$ and $h^{v_i} (\sigma_{v_i}) \notin N_{v_i}$ i.e. $A \notin N_{v_1}$, then using the fact that $A-E \in N_{v_1}$ and $-A +2E \in N_{v_1}$, we get that $A \in N_{v_1}$, which is a contradiction.

(Note that if $A$ consists of only one prime, then the proof is exactly the same as in the first part of Prop 4.1 in \cite{R})

\end{proof}

Let $\tilde{W}$ be the deformation ring and $\rho_{\tilde{W}}$ associated to the augmented problem with deformation conditions ($N_v, C_v$). As $\tilde{\rho_n} |_{G_q} \in C_q$ for $q \in A$ we have a surjection $ \phi : \tilde{W} \twoheadrightarrow  R_n /m_{R_n}^n$, which means that for some $I_1$, we $\tilde{W} / I_1 = R_n / m_{R_n}^n$. As $\dim H^1_N (G_{ S_n \cup B}, X) = \dim H^1_N (G_{S_n \cup B \cup A}, X)  =n +1$, we see that as a ring $\tilde{W}$ consists of power series of $(n+1)$ variables. Thus, for some $I_2$, $\tilde{W}/I_2 = \mathbb{F}_q [[T_1,...,T_{n+1}]]/(T_1,...,T_{n+1})^2 $. Let $I = I_1 \cap I_2$, and define $W_0 = \tilde{W} /I$. 

Our goal is to get a deformation ring which has $R_{n+1}/m_{R_{n+1}}^{n+1}$ as a quotient. If $W_0$ is such a deformation ring, we are done. If not, we get a sequence:

$$R_{n+1}/m_{R_{n+1}}^{n+1} \twoheadrightarrow ... \twoheadrightarrow W_1 \twoheadrightarrow W_0$$

where the kernel at each stage has order $p$. We add more primes of ramification to $S_n \cup B \cup A$ so that the augmented deformation ring has $W_1$ as a quotient and keep iterating to get our required deformation ring.

As $W_0$ is a quotient of $\tilde{W}$, we let $\rho_{W_0}$ be the deformation induced by $\rho_{\tilde{W}}$. As $\rho_{W_0} |_{G_v} \in C_v$ for $v \in S_n \cup B \cup A$ we can lift $\rho_{W_0}$ to $W_1$. Let us call this deformation $\rho_{W_1}$. Iterating the same argument as for $\rho_n$ we can lift $\rho_{W_1}$ to $W_2$ by adding a suitable set of primes $A_1$ to the set of ramification allowing us to eventually find a deformation that has $R_{n+1} / m^{n+1}_{R_{n+1}} = \mathbb{W}[[ T_1,...,T_{n+1}]] / (p, T_1,...,T_{n+1})^{n+1}$ as a quotient. 
Now we are in a position to state the final theorem.

\begin{theorem}
There exists an irreducible deformation of $\overline{\rho}$, ramified at infinitely many primes, $\rho : G_{\mathbb{Q}} \rightarrow GL_2 (\mathbb{W} [[T_1, T_2,.., T_r,....,]])$.
\end{theorem}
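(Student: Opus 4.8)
The plan is to assemble the pieces already constructed in the previous section into an inverse system and take a limit. First I would recall the inductive setup: the main body of the paper produces, for each $n$, a lift $\rho_n : G_{S_n} \rightarrow GL_2(R_n/m_{R_n}^n)$ with $R_n/m_{R_n}^n = \mathbb{W}[[T_1,\dots,T_n]]/(p,T_1,\dots,T_n)^n$, together with a set $S_{n+1} \supset S_n$ of primes (obtained by adding the primes $B$, then $A$, then the further primes $A_1, A_2, \dots$ needed to climb the tower $R_{n+1}/m^{n+1} \twoheadrightarrow \cdots \twoheadrightarrow W_1 \twoheadrightarrow W_0$) whose deformation ring admits $R_{n+1}/m_{R_{n+1}}^{n+1}$ as a quotient. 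The key input is that Lemma \ref{l4} guarantees the Selmer dimension only increases by one at each stage, so exactly one new variable $T_{n+1}$ appears; and Theorem \ref{t3}, via the deformation conditions $(N_v, C_v)$, guarantees $\Sha^2_{S_{n+1}}(X) = 0$ is maintained, so the obstruction spaces stay trivial and the lifts continue to exist.

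Next I would make the inverse system explicit. The surjections $R_{n+1}/m_{R_{n+1}}^{n+1} \twoheadrightarrow R_n/m_{R_n}^n$ arising from $S_n \subset S_{n+1}$ are compatible with the maps $\mathbb{W}[[T_1,\dots,T_{n+1}]]/(p,\underline{T})^{n+1} \twoheadrightarrow \mathbb{W}[[T_1,\dots,T_n]]/(p,\underline{T})^n$ sending $T_{n+1} \mapsto 0$, and the reductions of the universal lifts are the $\rho_n$. Setting $R = \varprojlim_n R_n/m_{R_n}^n$ one checks directly that $R \cong \mathbb{W}[[T_1, T_2, \dots, T_r, \dots]]$ in the sense that it is the inverse limit of truncated polynomial rings in growing numbers of variables; then define $\rho = \varprojlim_n \rho_n : G_{\mathbb{Q}} \rightarrow GL_2(R)$, which is well-defined because each $\rho_n$ factors through $G_{S_n}$ and the transition maps are compatible. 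Since $S_{n+1} \setminus S_n$ is nonempty for every $n$ (each stage genuinely adds ramified primes $B, A, A_1, \dots$), $\rho$ is ramified at infinitely many primes.

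It remains to verify irreducibility, and this is the step I expect to be the main obstacle. Reducibility of $\rho$ would force, after conjugation, an upper-triangular form over $R$, hence compatibly over every $R_n/m_{R_n}^n$; one has to rule this out using the nontriviality of the cohomology classes $h$ built from the $h^{v_i}$ in Theorem \ref{t3}. The point is that the class $h$ used at some stage has $h(\tau_v) = \left(\begin{smallmatrix} 0 & 0 \\ s & 0 \end{smallmatrix}\right)$ with $s \neq 0$ — a nonzero lower-left entry — so the corresponding lift cannot be conjugated into the Borel subgroup that contains $\overline{\rho}$; more precisely, the deformation at the prime $v$ lies in $C_v$, which forces a genuinely nonabelian image at $v$ incompatible with any global triangularization. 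I would argue that because $\overline{\rho}$ is indecomposable (so its own image is not diagonalizable) and because infinitely many of the added primes contribute lower-triangular ramification, the image of $\rho$ cannot be contained in any Borel, so $\rho$ is irreducible. Packaging this: the fixed space of $\rho$ acting on $R^2$ would have to be a free rank-one $R$-summand stable under all $\rho(\sigma_v)$ and $\rho(\tau_v)$, and the matrices just described have no common eigenline, giving the contradiction. This completes the proof.
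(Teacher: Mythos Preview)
Your construction of $\rho$ as the inverse limit of the $\rho_n$, together with the observation that each $S_{n+1}\setminus S_n$ is nonempty so that infinitely many ramified primes accumulate, matches the paper exactly; the paper's proof is in fact a one-line ``let $\rho=\varprojlim\rho_n$'' followed immediately by a citation.

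The difference is in the irreducibility step. The paper does not argue this at all: it simply invokes Corollary~43 of \cite{HR}. You instead sketch a direct argument, and your intuition is the right one --- indecomposability of $\overline{\rho}$ pins any putative global invariant line to the one reducing to $e_1$ modulo the maximal ideal, while at a newly added trivial prime the local condition $C_v$ (conjugated by $\left(\begin{smallmatrix}0&1\\1&0\end{smallmatrix}\right)$) forces $\rho(\tau_v)=\left(\begin{smallmatrix}1&0\\ s&1\end{smallmatrix}\right)$ with $s\neq 0$, and such a matrix stabilizes no line whose first coordinate is a unit. This is presumably the content of the cited corollary. Two small points of care: it is $h^{v}(\tau_v)$, not $h(\tau_v)$, that is guaranteed the strictly lower-triangular form in Lemma~\ref{l2}, so your argument is cleaner if phrased directly in terms of the local class $C_v$ at a ramified trivial prime rather than via the adjusting cocycle $h$; and your final ``no common eigenline'' sentence should be stated as above (the unique invariant line for $\rho|_{G_v}$ reduces to $e_2$, not $e_1$) rather than as a bare assertion that certain matrices share no eigenvector. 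With those tweaks your self-contained argument is a genuine alternative to the paper's black-box citation.
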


\begin{proof}
We let  $\rho = \displaystyle\lim_{\overleftarrow{n}} \rho_n$ and see that at each stage $n$, $Im \rho_n \supseteq GL_2 (\mathbb{W}[[ T_1,...,T_n]]/(p, T_1,...,T_n)^n)$. Hence, we get our desired deformation. By Corollary 43 of \cite{HR}, the deformation is irreducible.
\end{proof}

\section{Concluding remarks}

\begin{itemize}
\item 
In \cite{P}, one could not generalize the results of \cite{R} for all number fields. One of the problems in using our definition of trivial primes is that when one adds them to the ramification set to solve the local condition property (finding an $h^v$ such that $h^v |_{G_w} =(z_w)_{w \in Z}$), the behavior at inertia is hard to control. In the reducible case one can use the subspaces $U_i$ to find a suitable $h^v$ but, in the irreducible case it is hard to guarantee the behavior of $h^v$ at inertia.
\item In \cite{R}, the image of the deformation is full, i.e,  $\rho$ contains $SL_2(\mathbb{Z}_p [[T_1, T_2,.., T_r,....,]])$ but requires that the image of the residual representation $\overline{\rho}$ contains $SL_2(\mathbb{Z} / p \mathbb{Z})$, which is not true in our case. Hence, we do not get that the image of our deformation is full.
\end{itemize}

\section{Acknowledgements}

The author would like to thank A. Pacetti and M. Camporini for many conversations on deformation theory during two visits to Buenos Aires in 2014 which were funded by MathAmSud project DGMFPGT. This article was finished during a one year stay in 2015 at Universite Paris-6 which was funded by the CNPQ grant PDE 200845/2014-4. The author gratefully acknowledges support from all the agencies and thanks the Universidad de Buenos Aires and Universite Paris-6 for their hospitality.


\begin{thebibliography}{8} 

\bibitem[CP]{CP}
M. Camporini, A. Pacetti, {\em Congrunces between modular forms modulo prime powers}, preprint. http://front.math.ucdavis.edu/1312.4925.

\bibitem[HR]{HR} 
S. Hamblen, R. Ramakrishna, {\em Deformations of certain reducible Galois Representations II}, American Journal of Mathematics, v. 07, p. 2065, 2011.

\bibitem[KLR]{KLR},
C. Khare, M. Larsen, R. Ramakrishna, {\em Constructing semi-simple Galois Representations with prescribed properties}, American Journal of Mathematics 127 (2005), 709 - 734.

\bibitem[P]{P}
A. Pande, {\em Deformations of Galois Representations and the Theorems of Sato-Tate and Lang-Trotter}, Int. J. Num Thy, v. 07, p. 2065, 2011.

\bibitem[R]{R}
R. Ramakrishna, {\em Constructing Galois representations with very large image}, Canad. J. Math. Vol. 60 (1), 2008 pp. 208 - 221
 
\bibitem[S]{S}
J-P, Serre, {\em Abelian $l$ -adic representations and elliptic curves}. W.A. Benjamin, Inc, New York (1968). 
 
\end{thebibliography}
\end{document}